\date{\today}
\numberwithin{equation}{section}%
\newtheorem{theorem}{Theorem}[section]
\newtheorem{lemma}{Lemma}[section]
\newtheorem{definition}{Definition}[section]
\theoremstyle{definition}
\newcommand{\eps}{\varepsilon}
\newcommand{\R}{{\mathbb R}}
\newcommand{\Id}{\mbox{Id}}
\renewcommand{\r}[1]{\eqref{#1}}
\newcommand{\be}[1]{\begin{equation}\label{#1}}
\newcommand{\ee}{\end{equation}}
\renewcommand{\d}{\mathrm{d}}
\newcommand{\bo}{\partial M}
\title[Lorentzian scattering rigidity; boundary determination]{Boundary determination and local rigidity of analytic metrics in the Lorentzian scattering rigidity problem}
\author[P. Stefanov]{Plamen Stefanov}
\address{Department of Mathematics, Purdue University, West Lafayette, IN 47907}
\thanks{P.S. partly supported by  NSF  Grant DMS-2154489}
\begin{document}
\begin{abstract}
We study the scattering rigidity problem in Lorentzian geometry: recovery of a Lorentz\-ian metric from the scattering relation  known on a lateral timelike boundary.  We show that one can recover the jet of the metric up to a gauge transformation near a lightlike strictly convex point. Assuming that the metric is real analytic, we show that one can recover the metric up to a gauge transformation as well near such a point. 
\end{abstract} 
\maketitle

\section{Introduction}  
Let $(M,g)$ be a Lorentzian manifold  of dimension $1+n$ with a cylindrical-like timelike boundary, generalizing $\R\times N$ where $N$ is a compact Riemannian manifold with a smooth boundary. 
We define the (lightlike) scattering relations $\mathcal{S}$, and  $\mathcal{S}^\sharp$, acting on vectors and covectors, respectively, as the exit points and directions/codirections on the boundary of lightlike geodesics starting at such points and directions/codirections at the boundary, see Definition~\ref{def1} for a precise statement. The problem we study is to what extent does $\mathcal{S}$ or  $\mathcal{S}^\sharp$ determine $g$. In this generality the problem is wide open with some partial results so far described below. In this paper we show that one can recover the whole jet of $g$ on $\partial M$, assumed strictly convex to light rays, up to a gauge transformation. Also, if $g$  is a priori analytic, one can recover $g$ in $M$ near such points.   

The lightlike  $\mathcal{S}$ probes the metric over a restricted set of geodesics, satisfying $g(\dot\gamma, \dot\gamma)=0$. This takes  one dimension away from the set of all geodesics, and, together with the signature of the metric, makes the Lorentzian version of this problem harder with new   challenges. First, the group of the gauge transformations is richer, since it adds the freedom to multiply by an arbitrary  conformal factor $\mu>0$. In fact, it is arbitrary for $\mathcal{S}^\sharp$ with the additional restriction $\mu=\text{const.}$ on $\bo$ for $\mathcal{S}$. Next, the linearization of this problem is the geodesic light ray transform \cite{S-Lorentzian-scattering}, which is known to  be unable to see timelike singularities; roughly speaking those corresponding to signals moving faster than light. A loss of ellipticity exists in the Riemannian case as well when $n=2$, and we restrict ourselves to ``short'' geodesics close to being tangent to the boundary, but it is of a different nature.  Then it is not a priori intuitively clear in the Lorentzian case  whether one can expect boundary or local/global recovery, the latter even in the analytic case,  but we show that the local recovery is possible.

One possible motivation to study  $\mathcal{S}^\sharp$ comes from the analysis of the wave equation related to $g$. As it was shown in \cite{St-Yang-DN},  $\mathcal{S}^\sharp$ is the canonical relation of the Dirichlet-to-Neumann map $\Lambda$ on $\bo$, which is a Fourier Integral Operator. From relativity point of view, $\mathcal{S}$ and $\mathcal{S}^\sharp$ contains information about the way photon trajectories are affected by the Lorentzian structure of spacetime. A linearization of  $\mathcal{S}^\sharp$ from a spacelike hypersurface (``shortly'' after the Big Bang) to a future one (the present) is studied in \cite{LOSU-strings}, motivated by the information carried by the observed redshift of the cosmic background radiation.

Boundary determination results for Riemannian manifolds with boundary are well known \cite{LassasSU}, including stability estimates \cite{SU-JAMS}, and a constructive algorithm \cite{UhlmannWang}.  The strict convexity condition was relaxed considerably to include  concave points under a certain non-conjugacy condition in \cite{SU-lens}, see also \cite{Zhou-boundary_12}. As a consequence, Vargo \cite{Vargo_09} showed that one can recover an analytic non-trapping Riemannian manifold  up to an isometry  from the lens relation. This result was extended in \cite{Herreros-Vargo} to such manifolds with an analytic magnetic field. 
A major advancement in dimensions $n\ge3$ was done in \cite{SUV_localrigidity,SUV_anisotropic} based on the approach in \cite{UV:local}, where it was shown that for smooth metrics, we can recover not just the jet of $g$ on the boundary up to a gauge, but $g$ inside $M$ as well, near a strictly convex point. This implies global rigidity results under a foliation condition. The approach is based on ellipticity of the linearization (and on the use of Melrose's scattering calculus) which does not hold in our case.

There are many boundary and lens/scattering rigidity results in the Riemannian case, aside from the already mentioned \cite{SUV_localrigidity,SUV_anisotropic} 
see, e.g., \cite{SU-lens,SU-JAMS,SU-MRL,PestovU,MuRo,Mu2,Mu77,Michel,LassasSU,Colin14,CDS,CrokeH02,Croke_scatteringrigidity,Croke04,Burago-Ivanov}. They can be viewed as rigidity results for static Lorentzian metrics.  Very little is known in the Lorentzian case. Recovery of stationary metrics from the time separation function was studied in \cite{UhlmannYangZhou}.  
The author showed in \cite{S-Lorentzian-scattering}, that under some additional conditions, stationary metrics in cylindrical product type of manifolds $M = \R_t\times N_x$ with boundary are lens rigid near a generic set of simple metrics, including simple real  analytic ones. The reason for this is that projected onto the ``base'' $N_x$, such systems reduce to magnetic ones studied before in \cite{St-magnetic}. Scattering rigidity via timelike geodesics was studied in \cite{sebastian2024scattering} for stationary metrics. The dynamical system then projects to a magnetic-potential one studied in \cite{sebastian2024linearization, sebastian2023boundary}. 
Our second main result is that  assuming $(M,g)$ real analytic, it is uniquely determined by  $\mathcal{S}^\sharp$ near strictly lightlike convex boundary points. 

\textbf{Brief description of the approach.} 
To explain the challenges, we note first that we need to prove the existence of a gauge transformation, see Definition~\ref{def_gauge}, relating two metrics $\hat g$ and $g$ with the same data. That means construction of two quantities: a diffeomorphism $\psi$ and a conformal factor $\mu$. In the Riemannian case, we have $\psi$ only, and it is a priori clear what it should be near $\bo$, if there is rigidity: just identify the boundary normal coordinates for $\hat g$ and $g$. Then we need to prove that $\hat g=\psi^* g$ up to infinite order at $\bo$ with $\psi$ known. One way to do this is as in \cite{LassasSU} by looking at the Taylor expansion of $\hat g-\psi^* g$ in the normal variable $x^n$. We do not have such natural candidates for $\psi$ and $\mu$ in the Lorentzian case. To construct such candidates, we ``normalize'' first $\hat g$ and $g$ in the gauge equivalent class, see section~\ref{sec_step1}: to coincide on some timelike field $\partial/\partial x^0$ tangent to $\bo$; and at the same time to be both in boundary normal coordinates, both up to $O((x^n)^\infty)$, as $x^n\to 0+$. This leads to a non-characteristic Cauchy problem for a fully nonlinear PDE, see \r{p5}, which can be transformed easily into a quasilinear one, see \r{nf}. It is not a priori clear that this problem is solvable but it can always be solved up to $O((x^n)^\infty)$. Once we have this, we apply the Taylor series argument using the maximizing property of timelike geodesics. 

Assuming the metrics analytic, one would think that one can just use analytic continuation.  That is essential for this result, of course, but we need to show that $\psi$ and $\mu$ which we constructed only up to an infinite order at $\bo$, actually exist locally. We apply the Cauchy–Kowalevski theorem to show that the ``normalization'' in section~\ref{sec_step1} can be done exactly, locally, allowing us to construct $\psi$ and $\mu$ locally. Then we use analytic continuation. 

One could hopefully prove global rigidity results for analytic metrics under appropriate geometry conditions but that would require some non-trivial efforts, and will be studied in a forthcoming work. 

\textbf{Acknowledgments.} The author thanks Leo Tzou and Lauri Oksanen for an inspiring discussion on related problems, and to Sebasti\'an Mu\~noz-Thon for his critical remarks.

\section{Main results}

We start with the introduction of the main notions, see also \cite{S-Lorentzian-scattering}. 
Let $x\in \bo$, with $\bo$ timelike, and let $0\not=v$ be a lightlike vector at $x$ pointing into $M$. 
Assume that the lightlike geodesic $\gamma_{x,v}(s)$  hits $\bo$ again for the first time for some $s=s(x,v)$, at  $y = \gamma_{x,v}( s) \in \bo$ in the direction $w=\dot \gamma_{x,v}( s) $. 
We can define the scattering relation as the map $(x,v)\mapsto (y,w)$, see Figure~\ref{Lorentz_distance_fig1}.

It is convenient to identify such lightlike vectors $v$ with their orthogonal projections $v'$ to $T\bo$.  Those projections would be timelike, and in the limiting case when $v$ is tangential to $\bo$, they would be lightlike. Recall that  vectors that are either timelike of lightlike are called causal. Then we can think of the scattering relation as defined on the causal cone in $T\bo$ with an image in the causal cone in $T\bo$ unless the corresponding geodesic is trapping.

 This leads to the following. 

\begin{definition} \label{def1}
	The scattering relation $\mathcal{S}$, mapping the causal cone in $T\bo$ to itself, is defined as follows. 
 Let $(x,v')\in T\bo$ be timelike, and let $v$ be the unique lightlike vector at $x$  with orthogonal projection $v'$ on $T_x\bo$, pointing to the interior of $M$.  Then we set   $y\in \bo $ to be the point where the geodesic $\gamma_{x,v}$ issued from $(x,v)$ meets $\bo$ again for the first time, and set  $w'$ to be the orthogonal projection on $T_y\bo$ of its direction there. We set   $\mathcal{S}(x,v') = (y,w')$.
 
 When $(x,v')$ is lightlike, $v=v'$ is tangent to $\bo$, and we set $\mathcal{S}(x,v') = (x,v')$.
 
 	The scattering relation $\mathcal{S}^\sharp$ on the causal cone in $T^*\bo$ is defined as $\mathcal S$ by identifying vectors and covectors by the metric. 
\end{definition}

We use the musical isomorphism notation below converting vectors to covectors and vice versa. 

If $\gamma_{x,v}$ happens to be trapping for some $(x,v)$, we just consider that $(x,v)$ not to be in the domain of $\mathcal{S}$ (and this is not going to be allowed in this paper). Knowing $\mathcal{S}$ or $\mathcal{S}^\sharp$ includes knowing the domain.

\begin{figure}[h!] 
  \centering
  \includegraphics[scale=1,page=4]{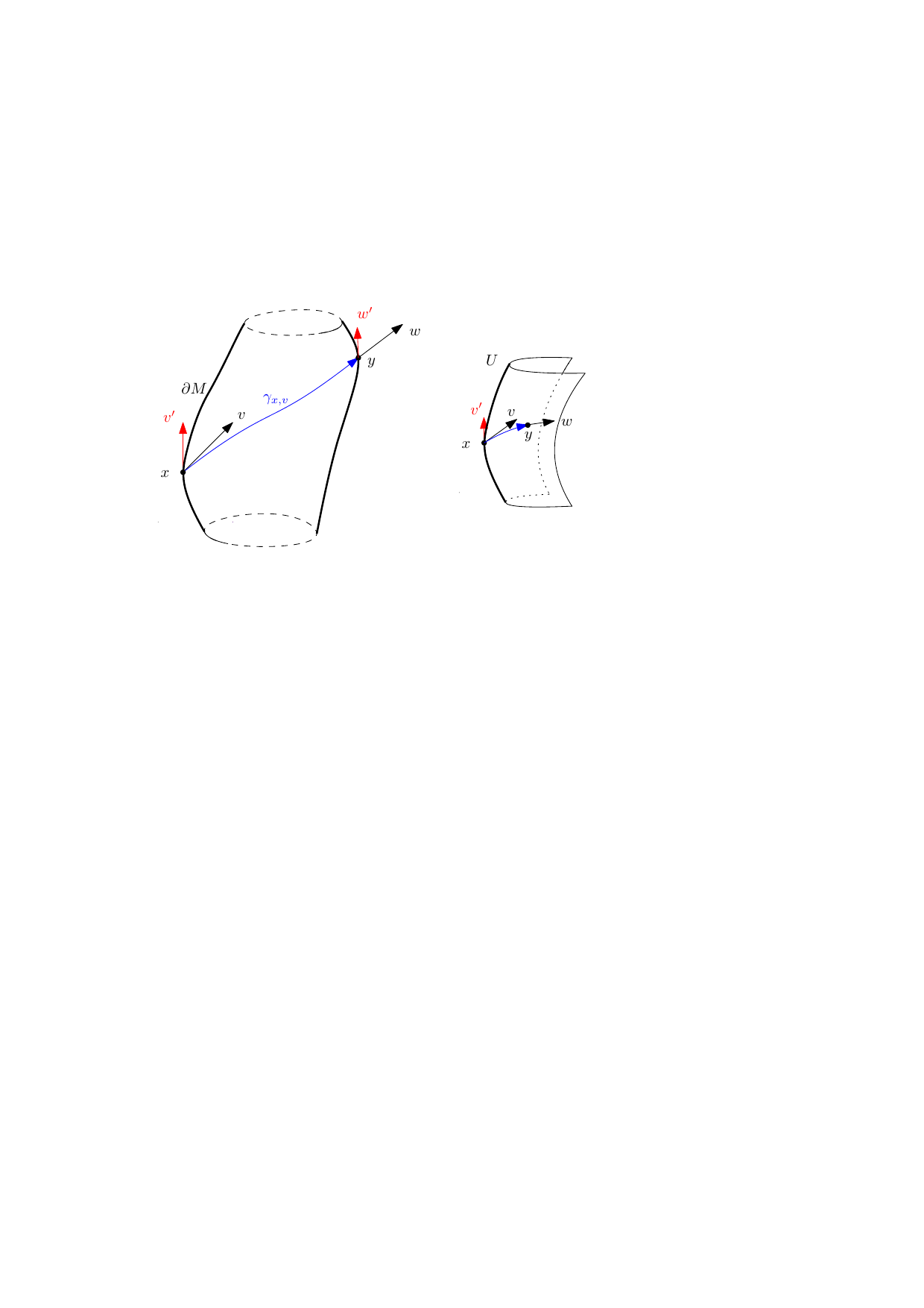}
\caption{\small  The scattering relation $\mathcal{S}(x,v')= (y,w')$ on the causal cone on $T\bo$, left. Its version when $v$ is close to being tangent to $U\subset \bo$, right.
}
\label{Lorentz_distance_fig1}
\end{figure}

 Clearly, $\mathcal S$ and $\mathcal{S}^\sharp$ are positively homogeneous of order one in the fiber variable. We have $\mathcal{S}(x,a(x,v')v') = (y,a(x,v')w') $ for every $a>0$. We may normalize $v'$ in some way to reduce the number of variables. For example,  if $x^0$ is a local time variable, we may require $v^0=1$. 

The definition of $\mathcal{S}$ or $\mathcal{S}^\sharp$ requires us to know which vectors/covectors on the boundary are causal, which is equivalent to knowing a conformal multiple of $g$ restricted to $T\bo\times T\bo$. Instead of prescribing $\mathcal S$ or $\mathcal{S}^\sharp$, we can consider the pairs of points $(x,y)$ in  $\Sigma\subset U\times V\subset \bo\times\bo$ which can be connected by a lightlike geodesic, so that $x$ and $y$ are not conjugate along it. It was shown in  \cite{S-Lorentzian-scattering} that $\Sigma$ and $\mathcal{S}^\sharp$ determine each other uniquely. 

\begin{definition}\label{def_gauge}
We call  $(M,g)$ and $(\hat M, \hat g)$ \emph{gauge equivalent}, if there exists a diffeomorphism $\psi: M\to \hat M$ fixing $\partial M$ pointwise, and a function $\mu>0$, so that $ g   = \mu  \psi^* \hat g$. When studying a local version of $\mathcal{S}$ or $\mathcal{S}^\sharp$, we assume that $\psi$ and $\mu$ are defined in the open set of $M$ covered by the geodesic used in the definition of $\mathcal{S}$.
\end{definition}

Fixing a positive sign of the conformal factors, as we did, preserves the future/past orientation. 
We showed in \cite{S-Lorentzian-scattering} that gauge equivalent Lorentzian metrics on the same manifold have the same $\mathcal{S}^\sharp$. The relation $\mathcal{S}$, on the other hand, is affected by $\mu$ on $\bo$, and gauge transformations preserve $\mathcal{S}$ if  $\mu=\text{const.}$ on $\bo$ only.  
In both cases, the gauge transformations form a group generated by conformal multiples and isometries fixing $\bo$ pointwise.

We define the notion of strict convexity as in \cite{S-support2014}.

\begin{definition}\label{def_convex}
Let $U$ be a  smooth hypersurface near a point $z\in U$ and let $F$ be a defining function so that $U=F^{-1}(0)$ near $z$, $\d F(z)\not=0$, and declare $\{F>0\}$,  to be the ``interior'' of $M$ near $z$. Similarly, $\{F<0\}$ is the ``exterior'' of $M$ near $z$.  We say that $U$ is \textit{strictly convex} at $z$ in the direction $v\in T_zU$, if  $\nabla^2F(z)(v,v)>0$. 
%
\end{definition}

Strict  convexity of $U$ for $(z,v)$ restricted to a set of null vectors can be defined in a similar  way. In this paper, the directions would be lightlike, and we call this strict lightlike convexity. 
Here $\nabla^2F$ is the Hessian of $F$, with $\nabla$ being the covariant derivative. This notion of strict convexity at $(z,v)$ is equivalent to $\frac{\d^2} {\d s^2}\big|_{s=0}F\circ\gamma(s)<0$ for the geodesic $\gamma$ through $z$ in the direction $v$; and it is independent of the choice of $F$. When $U\subset\bo$, the interior is $M$, and the exterior does not exist. We can extend $g$ smoothly on the other side of $M$ however, and then this characterization still holds independently of the extension. 

Our first main result is boundary recovery for smooth metrics.    
   
\begin{theorem}[Boundary Recovery] \label{thm_bdry}
	Let $g$ and $\hat g$ be two Lorentzian metrics defined near some $x_0\in\bo$ so that $\hat g = \mu_0 g$ on $T\bo\times T\bo$ locally  with some $0<\mu_0\in C^\infty(\bo)$. Let $(x_0,v_0)\in T\bo$ be lightlike for $g$. Assume that $\bo$ is strictly convex with respect to   $g$  in the direction of $(x_0,v_0)$.  
	Assume 
	that either 
	\begin{itemize}
		\item[(i)]  $ \hat{\mathcal S}^\sharp = \mathcal{S}^\sharp$ in a neighborhood of $(x_0,v_0^\flat)$, or
		\item[(ii)] $\hat{\mathcal S}= \mathcal{S}  $ in a neighborhood of $(x_0,v_0)$, and $\mu_0=\text{const}$.  
	\end{itemize}
	Then there exists $\mu(x)>0$ with $\mu=\mu_0$ on $\bo$, and a local diffeomorphism $\psi$ near $\bo$ preserving it pointwise, so that the jets of $g$ and $\mu\psi^*\hat g$ coincide on $\bo$ near $x_0$.  
\end{theorem}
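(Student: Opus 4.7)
The plan is to follow the two-step strategy outlined in the introduction: first I install a gauge in which $g$ and the pulled-back $\hat g$ share a prescribed normal form up to infinite order at $\bo$, and then I use the strict lightlike convexity at $(x_0,v_0)$ together with $\hat{\mathcal S}^\sharp = \mathcal{S}^\sharp$ to promote formal coincidence to equality of jets.

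For the normalization I would fix boundary normal coordinates $(x',x^n)$ for $g$ near $x_0$, so that $g_{nn}=1$ and $g_{n\alpha}=0$ for $\alpha<n$, and seek a diffeomorphism $\psi:(x',x^n)\mapsto (X'(x',x^n),X^n(x',x^n))$ fixing $\bo$ pointwise, together with a positive function $\mu$ whose restriction to $\bo$ is determined by the scalar boundary data, such that the normalized metric $\tilde g := \mu\,\psi^*\hat g$ simultaneously satisfies $\tilde g_{nn}=1$, $\tilde g_{n\alpha}=0$, and $\tilde g(\partial_{x^0},\cdot) = g(\partial_{x^0},\cdot)$ for a fixed tangential timelike direction $\partial_{x^0}$. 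These constraints constitute a Cauchy problem in $x^n$ for $(\psi,\mu)$. Strict lightlike convexity of $\bo$ at $(x_0,v_0)$ for $g$ is what forces this fully nonlinear system to be non-characteristic; after solving for the top-order derivatives it becomes quasilinear, and a standard formal power-series argument in $x^n$ determines $(\psi,\mu)$ to infinite order at $\bo$, which can be Borel-summed to smooth functions on a one-sided neighborhood.

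With this normalization in place, set $h := \tilde g - g$; by construction $h_{nn}$, $h_{n\alpha}$, and $h(\partial_{x^0},\cdot)$ vanish to infinite order at $\bo$. I would then proceed by induction on $k$: assume $h = O((x^n)^k)$ and argue that the leading Taylor coefficient vanishes. Strict convexity guarantees that lightlike geodesics of $g$ issued from $\bo$ near $x_0$ with directions close to $v_0$ re-emerge from $\bo$ after a short time, and similarly for $\tilde g$; equality of $\mathcal S^\sharp$ identifies the exit points and codirections. An asymptotic expansion of the scattering relation in the grazing parameter then exposes the leading Taylor coefficient of $h$. Here the maximizing property of nearby timelike geodesics plays the role that the minimizing property of arclength plays in the Riemannian argument of \cite{LassasSU}: proper time along a timelike variation of the grazing geodesic is a variational functional whose first-order expansion in the grazing parameter differs between $g$ and $\tilde g$ precisely by the Taylor coefficient in question, and this difference is forced to zero by the equality of the scattering data. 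Iterating closes the induction.

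The main obstacle should be stage one: verifying that the normalization PDE is truly non-characteristic at a lightlike tangential direction $v_0$, and carefully balancing the twin gauge freedoms $(\psi,\mu)$ against the twin normalization conditions so that the fully nonlinear Cauchy problem really reduces to a solvable quasilinear one with data prescribed by $g$ and $\mu_0$ alone. Case (ii) of the theorem, where $\mathcal S$ rather than $\mathcal S^\sharp$ is given and $\mu_0$ must be constant, reflects the fact that $\mathcal S$ depends on the musical isomorphism on $T\bo$ and is therefore sensitive to non-constant boundary conformal rescalings; this constraint should enter the boundary matching naturally. Once stage one is established, stage two is essentially a Lorentzian analogue of the classical Riemannian boundary-determination argument, and I expect it to go through without substantial new difficulty.
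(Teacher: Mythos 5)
Your overall two-stage strategy (normalize the gauge to infinite order at $\bo$, then use the maximizing property of timelike geodesics to compare Taylor coefficients) is the same as the paper's, but three concrete points need repair. First, you attribute the non-characteristic nature of the normalization Cauchy problem to strict lightlike convexity at $(x_0,v_0)$. That is not where non-characteristicity comes from: it comes from $\bo$ being timelike, so that $g_{00}\neq 0$, the normal direction is spacelike, and the restricted metric is nondegenerate; the system \eqref{p5} can then be solved for $\partial_{x^n}\psi$ at $x^n=0$ (taking $\partial\psi^n/\partial x^n>0$) with no convexity input at all. Convexity enters only in the jet-comparison stage, where it guarantees that nearly tangent lightlike geodesics return to $\bo$ after a short time --- and note that the corresponding statement for $\hat g$ is not automatic: one must first prove, using the equality of the scattering relations, that $\bo$ is strictly convex for $\hat g$ as well, which you assert without argument. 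Second, your normalization is over-determined: you impose $\tilde g(\partial_{x^0},\cdot)=g(\partial_{x^0},\cdot)$, which is $n+1$ scalar conditions, on top of $\tilde g_{nn}=1$ and $\tilde g_{n\alpha}=0$, giving $2n+1$ equations for the $n+2$ unknowns $(\psi,\mu)$. The solvable (square) system imposes only the single scalar condition $\tilde g_{00}=g_{00}$ together with $\tilde g_{in}=\delta_{in}$, as in \eqref{p1}.

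The more serious gap is in the induction step. The scattering data only controls the quadratic form $h_{\alpha\beta}v^\alpha v^\beta$ for $v$ \emph{lightlike} and tangent to $\bo$ near $v_0$. Showing that this vanishes does not yet kill the leading coefficient $h$: one needs the algebraic lemma that a symmetric $2$-tensor whose quadratic form vanishes on the light cone of $g$ must equal $\lambda g$, and only then does the normalization $h_{00}=0$ force $\lambda=0$ (since $g_{00}\neq0$). This conformal loophole is precisely the new Lorentzian feature relative to \cite{LassasSU}, and your argument never closes it. Finally, the mechanism by which the leading coefficient is ``exposed'' is not a first-order expansion of the scattering relation: the paper's argument is that if $h(v_0,v_0)>0$ at the leading order, then the $\hat g$-lightlike geodesic joining $x_0$ to the common exit point (the same point for both metrics, by the hypothesis on the scattering relations) is timelike for $g$ in the interior, while the maximizing property of radial timelike geodesics \cite[Proposition~5.34]{ONeill-book} forces its $g$-proper time to have upper limit zero --- a contradiction. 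Your variational sketch points in this direction but would need to be made precise along these lines before the induction closes.
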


Next, we prove local rigidity for analytic manifolds and metrics. Analyticity always means real analyticity in this paper, and analyticity in $M$ always means analyticity up to the boundary, i.e., existence of an analytic extension in some two-sided neighborhood of $M$, where $M$ is assumed to have an analytic extension as well. Similarly, analyticity near $x_0\in \bo$ means analyticity in a two-sided neighborhood of $x_0$ in the extended $M$. 

\begin{theorem}[Local rigidity for analytic metrics]\label{thm_local} 
	Under the assumption of Theorem~\ref{thm_bdry}, assume that $M$, $\bo$ are analytic, $g$, $\hat g$ are analytic as well in $M$ near $x_0$, and $\mu_0$ is analytic on $\bo$ near $x_0$. Assume either (i) or (ii) of Theorem~\ref{thm_bdry}. Then there exists an analytic $\mu(x)>0$ with $\mu=\mu_0$ on $\bo$, and an analytic local diffeomorphism $\psi$ near $x_0$ preserving $\bo$ pointwise, so that $g = \mu\psi^*\hat g$  near $x_0$. 
\end{theorem}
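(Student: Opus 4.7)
The plan is to upgrade the formal, infinite-order boundary normalization used in the proof of Theorem~\ref{thm_bdry} to an \emph{exact} local analytic normalization via the Cauchy–Kowalevski theorem, and then let analyticity propagate the agreement inward. In the smooth setting, Section~\ref{sec_step1} brings both $g$ and $\hat g$ simultaneously into a common normal form (boundary normal coordinates sharing a tangential timelike field $\partial/\partial x^0$) only up to $O((x^n)^\infty)$, because the gauge transformation must solve a quasilinear non-characteristic Cauchy problem (the equation \r{nf}) with Cauchy data on $\bo$. Under the hypotheses of Theorem~\ref{thm_local} the coefficients and Cauchy data are real analytic, and the non-characteristic structure in $x^n$---a consequence of strict lightlike convexity of $\bo$---is unchanged. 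So I would invoke Cauchy–Kowalevski to obtain an exact local analytic solution, producing an analytic local diffeomorphism $\psi$ fixing $\bo$ pointwise and an analytic positive $\mu$ with $\mu|_{\bo}=\mu_0$, such that $\tilde g:=\mu\psi^*\hat g$ is in the same normal form as $g$ on an honest neighborhood of $x_0$.

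Next I would verify that $g$ and $\tilde g$ agree to infinite order on $\bo$ near $x_0$. Since $\psi$ fixes $\bo$ and $\mu|_{\bo}=\mu_0$, the gauge invariance of the scattering relation (recalled after Definition~\ref{def_gauge}) gives $\tilde{\mathcal S}^\sharp=\hat{\mathcal S}^\sharp=\mathcal S^\sharp$ near $(x_0,v_0^\flat)$ in case (i), and analogously for $\mathcal S$ in case (ii). Applying Theorem~\ref{thm_bdry} to the pair $(g,\tilde g)$---which are already in the common normal form, so that any residual gauge produced by that theorem must be the identity jet on $\bo$---the Taylor jets of $g$ and $\tilde g$ coincide along $\bo$ near $x_0$.

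Finally, both $g$ and $\tilde g$ are real analytic in a two-sided neighborhood of $x_0$, and at every boundary point near $x_0$ all derivatives of $g-\tilde g$ vanish: the tangential ones because $g=\tilde g$ along $\bo$, and the normal ones by the previous step. Since a real analytic function with this property on a connected analytic neighborhood vanishes identically, we conclude $g=\tilde g=\mu\psi^*\hat g$ near $x_0$, as desired. The main obstacle is the first step: one must verify carefully that the Cauchy problem of Section~\ref{sec_step1} is genuinely non-characteristic in the analytic category (which is where strict lightlike convexity enters), and that the Cauchy–Kowalevski solution carries the correct geometric content---namely that $\psi$ is a local diffeomorphism preserving $\bo$ and $\mu$ is positive with the prescribed boundary value. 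Once Step~1 is secured, the remaining steps are routine applications of Theorem~\ref{thm_bdry} and analytic continuation.
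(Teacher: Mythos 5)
Your proposal follows essentially the same route as the paper: upgrade the infinite-order normalization of Section~\ref{sec_step1} to an exact local analytic one via Cauchy--Kowalevski applied to the normal form \eqref{nf}, invoke the jet recovery of Theorem~\ref{thm_bdry} for the normalized pair, and conclude by analytic continuation. One small correction: the non-characteristic nature of the Cauchy problem \eqref{p5}, \eqref{p4} comes from the boundary being timelike (so that $\partial_{x^n}\psi$ can be solved for at $x^n=0$), not from strict lightlike convexity, which enters only in Steps~2 and~3 of the jet determination.
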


\section{Preliminary results}   \label{sec_prel}

\subsection{Gauge transformations}
We review some results from \cite{S-Lorentzian-scattering}. Recall that multiplying $g$ by a conformal factor $\mu>0$ reparameterizes the future directed null geodesics, keeping the future orientation, but leaves them the same as point sets \cite{LOSU-strings, S-Lorentzian-scattering}. This extends easily to more general Hamiltonians and to $\mu$ depending on both $x$ and $\xi$.  Next lemma is stated in a greater generality than we need it, and it is known in principle. 
\begin{lemma}
	Let $H(x,\xi)$ be a Hamiltonian defined near $(x_0,\xi^0)$, and assume $ H(x_0,\xi^0)=0$, $\d H(x_0,\xi^0)\not=0$. Let $\mu(x,\xi)>0$ near $(x_0,\xi^0)$. Then the Hamiltonian curves of $H$ and $\tilde H:= \mu H$ near $(x_0,\xi^0)$ on the zero energy level coincide as point sets but have possibly different parameterizations. 
	
	More specifically, if $(x(s),\xi(s))$, and $(\tilde x(\tilde s), \tilde \xi(\tilde s))$ are solutions related to $H$ and $\tilde H$, with initial conditions $(x_0,\xi^0)$ at $\tilde s=  s_0$, and $(\tilde x_0,\tilde \xi^0)$ at $s=s_0$, $\tilde s=\tilde s_0$, respectively, then 
	\[
	(\tilde x(\tilde s(s)), \tilde \xi(\tilde s(s))) = (x(s),\xi(s))
	\]
	with $\tilde s(s)$ solving
	\[
	\frac{\d \tilde s}{\d  s}=\mu^{-1}( x( s),\xi(s)), \quad \tilde s(s_0)= \tilde s_0.
	\]
	
\end{lemma}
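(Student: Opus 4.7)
The plan is to exploit the Leibniz rule for Hamiltonian vector fields: for any smooth $F$, $G$ on phase space, $X_{FG} = F\, X_G + G\, X_F$, an immediate consequence of the corresponding identity for Poisson brackets. Applied to $\tilde H = \mu H$ this yields
\[
X_{\tilde H} = \mu X_H + H X_\mu,
\]
and restricting to the zero energy level $\{H=0\}$ kills the second term, leaving $X_{\tilde H} = \mu X_H$ on that hypersurface. The hypothesis $\d H(x_0,\xi^0)\neq 0$ guarantees that $\{H=0\}$ is a smooth hypersurface near $(x_0,\xi^0)$ and that $X_H$ is non-vanishing there.

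Next, because $H$ is preserved along its own flow and $\tilde H = \mu H$ is preserved along its flow, the set $\{H=0\}$ is invariant under both Hamiltonian vector fields. Any integral curve of either field starting at $(x_0,\xi^0)$ therefore remains on this common invariant hypersurface, where the two vector fields are proportional by the strictly positive function $\mu$. Hence the two flows trace out the same oriented curves as point sets, differing only in their parameterization.

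To identify that parameterization explicitly, I set $\tilde\gamma(\tilde s) := \gamma(s(\tilde s))$ for $\gamma$ a solution of $\dot\gamma = X_H(\gamma)$ with $\gamma(s_0) = (x_0,\xi^0)$. The chain rule gives $\dot{\tilde\gamma}(\tilde s) = (ds/d\tilde s)\, X_H(\gamma(s(\tilde s)))$, and demanding $\dot{\tilde\gamma} = X_{\tilde H}(\tilde\gamma) = \mu\, X_H$ forces $ds/d\tilde s = \mu(\gamma(s))$, i.e.\ $d\tilde s/ds = \mu^{-1}(x(s),\xi(s))$, with the stated initial conditions. Positivity of $\mu$ makes the ODE well posed and $s\mapsto\tilde s(s)$ a strictly increasing local diffeomorphism, so the reparameterization is globally consistent on the common piece of orbit.

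No substantive obstacle arises; the heart of the argument is the algebraic identity $X_{\mu H} = \mu X_H + H X_\mu$. The only subtlety worth flagging is that the conformal factor enters the dynamics cleanly only after restricting to the null set, which is precisely why the conclusion is stated on $\{H=0\}$: on other level sets the term $H X_\mu$ genuinely deforms the flow, and the two Hamiltonian systems are no longer related by a mere reparameterization.
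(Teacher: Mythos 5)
Your proof is correct and follows essentially the same route as the paper: write $X_{\mu H}=\mu X_H+H X_\mu$ (the paper does this by writing out the Hamiltonian system for $\tilde H$ explicitly), note that the term $HX_\mu$ vanishes on the zero energy level, and then read off the reparameterization ODE $\d\tilde s/\d s=\mu^{-1}$ from the chain rule. Your explicit remarks on the invariance of $\{H=0\}$ under both flows and on the well-posedness of the reparameterization are left implicit in the paper but add nothing essentially new.
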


\begin{proof}
	The Hamiltonian system for $\big(\tilde x(\tilde s),\tilde \xi(\tilde s) \big)$ related to $\tilde H$ reads
	\[
	\frac{\d \tilde x}{\d \tilde s} =  \mu H_\xi+ \mu_\xi H, \quad \frac{\d \tilde \xi}{\d \tilde s} = - \mu H_x- \mu_x H.
	\]
	Assume initial conditions $(x_0,\xi^0)$ at $\tilde s=\tilde s_0$. On the energy level $H=\tilde H=0$, we are left with
	\[
	\frac{\d \tilde x}{\d \tilde s} =  \mu H_\xi, \quad \frac{\d \tilde \xi}{\d \tilde s} = - \mu H_x.
	\]
	Then for $s\mapsto (\tilde x(\tilde s(s)), \tilde \xi(\tilde s(s)))$ we get the Hamiltonian system related to $H$ with initial conditions as stated above. This completes the proof. 
\end{proof}

We apply the lemma to the Hamiltonian $H(x,\xi)= \frac12 g^{ij}(x)\xi_i\xi_j$, written in local coordinates. It is well known that the Hamiltonian curves of $H$ on $T^*M$ at zero energy level, when identified with curves in $TM$ by the musical isomorphism, coincide with the lightlike geodesics. When $\hat g = \mu g$, the corresponding Hamiltonian is $\hat H= \mu^{-1}H$. Then 
\[
(\tilde x(\tilde s), \tilde \xi(\tilde s)) = (x(s(\tilde s)),\xi(s(\tilde s)))
\]
with 
$ s(\tilde s)$ solving
\[
\frac{\d s}{\d \tilde  s}=\mu^{-1}( x(\tilde  s)), \quad  s(\tilde s_0)=  s_0.
\]
This shows that $\mathcal{S}^\sharp$ is invariant under the conformal transformation $\tilde g=\mu g$. Indeed, $(x,\xi)\mapsto (y,\eta)$ is unchanged, see Figure~\ref{Lorentz_distance_fig1}. Next, $\xi$ has the unique decomposition $\xi=\xi'+\xi''$, where $\xi'\in T_x^*U$, and $\xi''$ is conormal to $TU$, i.e., normal to all covectors in $T^*U$ in the metric $g^{-1}$. This decomposition does not change when we replace $g$ by $\mu g$. Therefore, the projection $\xi'$ is independent of the conformal factor, and the same applies to $\eta'$ at $y$. Thus $\mathcal{S}^\sharp$ is independent of a conformal transformation. On the other hand, when $\hat g = \mu_0 g$ on $T\bo \times T\bo$, one can multiply either metric by $\mu_1>0$ with $\mu_1=1$ on $\bo$ only to preserve that condition. 

Those arguments apply to the vectors $v$ and $w$ as well but there is an essential difference. The map $(x,v)\mapsto (y,w)$, before the projections, does depend on $\mu$ because the musical isomorphism converting $\xi'$ into $v'=(\xi')^\sharp$, and similarly for $\eta'$ and $w'$ brings the factor $\mu^{-1}$.  Indeed, assume $\mathcal{S}=\tilde{\mathcal{S}}$ now. Then $\mathcal{S}(x,v)=(y,w)$ can be computed as 
\[
(x,v) \xmapsto{\ \flat\ }(x,gv) \xmapsto{\ \mathcal{S}^\sharp \ } (y,\eta) \xmapsto{\ \sharp\ } (y,w=g^{-1}\eta),
\]
where $(y,\eta) = S^\sharp(x,gv)$ by definition. 
With $\tilde g = \mu g$, we have $\tilde{\mathcal{S}}^\sharp = {\mathcal{S}}^\sharp$, and  $\tilde{\mathcal{S}}(x,v)=(y,\tilde w)$ can be computed as 
\[
(x,v) \xmapsto{\ \flat\ }(x,\mu(x)gv) \xmapsto{\tilde{\mathcal{S}}^\sharp= \mathcal{S}^\sharp} (y,\mu(x)\eta) \xmapsto{\ \sharp\ } \left(y,\tilde w =\mu^{-1}(y) g^{-1}(y)\mu(x)\eta\right) .
\]
Therefore,
\[
\mathcal{S}(x,v)=(y,w) \quad \Longrightarrow  \quad 
\tilde{\mathcal{S}}(x,v) = (y, \mu^{-1}(y)  \mu(x)w),
\]
see also  in \cite[eq.~(17)]{LOSU-strings}. 
Therefore, $\mathcal{S}$ is preserved under the conformal change $\tilde g = \mu g$ if and only if $\mu(x)=\mu(y)$ for all $(x,y)\in\Sigma$, which implies $\mu=\text{const.}$ on $U\times V$. Moreover, if $\hat g = g$ on $T\bo\times T\bo$, then the condition is $\mu=1$. Under the conditions of Theorem~\ref{thm_bdry}, when $\hat g=\mu_0 g$ on $T\bo\times \bo$, and they have the same scattering relation, the conformal freedom we have is to multiply one of the metrics by $\mu_1>0$ with $\mu_1=1$ on $\bo$ which preserves the boundary condition, and it is the conformal factor allowable to preserve $\mathcal{S}$. Then we get the same condition as that for $\mathcal{S}^\sharp$, see the last sentence of the theorem, but for two reasons, instead for one. 

Note however that $\mathcal{S}_{\tilde g} = \mathcal{S}_g$ for two metrics, implies $\mathcal{S}_{\mu\tilde g} = \mathcal{S}_{\mu g}$ for every $\mu>0$. 

On the other hand, any diffeomorphism $\psi$ with $\psi=\Id$ on $\bo$ preserves both $\mathcal{S}$ and $\mathcal{S}^\sharp$. It intertwines with multiplying $g$ by a conformal factor without changing its boundary values. 

\subsection{Jets of tensors fields} \label{sec_3.2}
We fix a small neighborhood $U\subset \bo$ of $x_0\in\bo$, reserving the right to shrink it several times during the proof. We use coordinates $x=(x',x^n)$, $x'=(x^0,x^1,\dots, x^{n-1})$ in $M$ near $x_0=0$, where  $U\subset \bo$ is given locally by   $x^n=0$ with $x^n\ge0$ in $M$. 
Given a tensor field $f$ of type $(0,2)$ (metric-like), the jet of $f$ on the hypersurface $x^n=0$ in such a local chart is given by $\partial_{x^n}^k f_{ij}$ on $x^n=0$ for all $k=0,1,\dots$. Knowing the jet of $f$ implies knowing all derivatives of $f$ at $x^n=0$, not just the $x^n$ ones. Under a change of variables, aside from having new variables,  $f$ changes its coordinate representation as well. The following lemma shows that knowing the jet is a  coordinate independent notion. 

\begin{lemma}
	Let $f$ and $\hat f$ be two type $(0,2)$ tensor fields with the same jets at $x^n=0$ near $x=0$ in some local coordinates. Let $x=\psi(y)$ be a local diffeomorphism near the origin so that $y^n=0$ is mapped to $x^n=0$. Then $\psi^*f$ and $\psi^*\hat f$ have the same jets on $y^n=0$ in the $y$ variables. 
\end{lemma}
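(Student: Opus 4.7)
The plan is to first reformulate the hypothesis in a more convenient form and then reduce the result to a simple chain-rule computation. Write $h_{ij} := f_{ij} - \hat f_{ij}$. By hypothesis, $\partial_{x^n}^k h_{ij}(x',0) = 0$ for all $k \ge 0$ and all $i,j$. Since $h_{ij}(\,\cdot\,,0)$ is the zero function of $x'$, all tangential derivatives of it vanish too, and more generally
\[
\partial_{x'}^{\alpha'}\partial_{x^n}^k h_{ij}(x',0) = \partial_{x'}^{\alpha'}\bigl( \partial_{x^n}^k h_{ij}(\,\cdot\,,0)\bigr)(x') = 0
\]
for every multi-index $\alpha'$ and every $k$. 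Thus the hypothesis "same jets on $\{x^n=0\}$" is equivalent to the statement that $h_{ij}$ vanishes to infinite order on $\{x^n = 0\}$, i.e.\ $\partial_x^\alpha h_{ij}(x',0) = 0$ for every multi-index $\alpha$. By symmetry, it will suffice to prove that $(\psi^* f - \psi^* \hat f)_{ab}$ vanishes to infinite order on $\{y^n=0\}$.

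Next, I would unwind the pullback in coordinates:
\[
(\psi^* f - \psi^* \hat f)_{ab}(y) = \frac{\partial \psi^i}{\partial y^a}(y)\,\frac{\partial \psi^j}{\partial y^b}(y)\, h_{ij}(\psi(y)).
\]
The Jacobian factors $\partial \psi^i/\partial y^a$ are smooth functions of $y$, so by the Leibniz rule it is enough to show that each $h_{ij}\circ \psi$ vanishes to infinite order on $\{y^n = 0\}$. This is the heart of the argument and is really just a chain-rule observation: for any multi-index $\beta$, Faà di Bruno gives
\[
\partial_y^\beta \bigl(h_{ij}\circ \psi\bigr)(y) = \sum_\gamma \bigl(\partial_x^\gamma h_{ij}\bigr)\!\bigl(\psi(y)\bigr)\, Q_{\beta,\gamma}(y),
\]
where the sum runs over a finite collection of multi-indices $\gamma$ with $|\gamma| \le |\beta|$ and each $Q_{\beta,\gamma}$ is a polynomial in partial derivatives of the components of $\psi$, hence smooth in $y$.

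Finally, I would combine these ingredients. Since $\psi$ maps $\{y^n=0\}$ into $\{x^n=0\}$, for $y^n=0$ the point $\psi(y)$ lies on $\{x^n=0\}$, and by the first step every $(\partial_x^\gamma h_{ij})(\psi(y))$ vanishes there. Hence $\partial_y^\beta(h_{ij}\circ \psi)$ vanishes on $\{y^n=0\}$ for every $\beta$, which is exactly the statement that $h_{ij}\circ\psi$ vanishes to infinite order on the hypersurface $\{y^n=0\}$. Multiplying by the smooth Jacobian factors and using Leibniz preserves this infinite-order vanishing, so $\psi^* f$ and $\psi^* \hat f$ indeed have the same jet on $\{y^n=0\}$. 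I do not anticipate a genuine obstacle here: the only slightly delicate point is the bookkeeping step of upgrading "$\partial_{x^n}^k$-derivatives agree" to "all partial derivatives agree", after which everything follows by elementary chain-rule manipulations.
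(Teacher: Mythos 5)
Your proposal is correct and follows essentially the same route as the paper: reduce to a single tensor $h=f-\hat f$ vanishing to infinite order on the hypersurface, note that the $x^n$-jet hypothesis upgrades to vanishing of all mixed partials there, and then apply the chain/Leibniz rules to the coordinate expression of the pullback. The paper compresses your Faà di Bruno bookkeeping into the single remark that applying any differential operator to the pulled-back expression yields zero on the hypersurface because the jet of $f$ vanishes there, but the underlying argument is identical.
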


\begin{proof}
	It is enough to assume that $\hat f=0$. 
	We have 
	\[
	(\psi^*f)_{ij}(y) = f_{i'j'}(x(y) )\frac{\partial x^{i'}}{\partial  y^i} \frac{ \partial  x^{j'}}{\partial  y^j}.
	\]
	On $\{x^n=0\} = \{y^n=0\} $ we have $f=0$, which implies the same for $\psi^*f$. More generally, applying any, say constant coefficient (in the $y$ variables) differential operator $P$ to the left-hand side, yields zero on the right on the same hypersurface since the jet of $f$ is zero there. 
\end{proof}

In particular, we can take $\psi$ fixing $x^n=0$ pointwise locally, i.e.,   $x^\alpha=y^\alpha$, $\alpha\le n-1$, and $x^n=x^n(y)$. Then two tensor fields having the same jet at $\{x^n=0\}$ is equivalent to having the same same $y^n$ derivatives of every order under any such change. 

\subsection{Boundary normal coordinates} 
   
Lorentzian manifolds with boundary admit boundary normal coordinates similarly to Riemannian ones.  The following lemma is formulated in \cite{St-Yang-DN} but the proof is in    \cite{Petrov_book}. It  is based on the fact that the lines $x'=\text{const.}$, $x^n=s$ are unit speed geodesics; therefore the Christoffel symbols  $\Gamma_{nn}^i$ vanish for all $ i$.  
   
\begin{lemma} \label{seminormal}
Let $S$ be  a timelike  hypersurface in $M$. For every $x_0\in S$, there exist $\eps>0$, a neighborhood $W$ of $x_0$  in $M$, and a diffeomorphism $\Psi:S\cap W \times [0,\eps)\rightarrow W$ such that

 (i)  $\Psi(z,0)=z$ for all $z\in S\cap W$;
 
(ii)  $\Psi(z,x^n) = \gamma_{z,\nu}(x^n)$ where $ \gamma_{z,\nu}(x^n)$ is the unit speed geodesic issued from $z$ normal to $S$.

Moreover,   if $x'=(x^0,\dots,x^{n-1})$ are local boundary coordinates on $S$, in the coordinate system \allowbreak $(x^0,\dots,x^n)$,  the metric tensor $g$ takes the form
\begin{equation} \label{metricform}
g=g_{\alpha\beta}(x) \d x^\alpha\otimes \d x^\beta + \d x^n\otimes \d x^n, \quad 0\le \alpha,\beta\le n-1. 
\end{equation}
\end{lemma}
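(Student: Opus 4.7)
The plan is to build $\Psi$ as the unit normal exponential map off $S$ and then read the claimed metric form off the defining geodesic property of the $x^n$-lines. The first step is the one Lorentzian-specific input: because $S$ is timelike, the induced metric on $T_zS$ is Lorentzian, so its one-dimensional $g$-orthogonal complement is necessarily spacelike. I would therefore pick a smooth unit normal field $\nu$ along $S\cap W$ pointing into $M$ with $g(\nu,\nu)=1$, and set
\[
\Psi(z,s)=\gamma_{z,\nu(z)}(s),
\]
which is well defined for $(z,s)$ in a neighborhood of $(x_0,0)$ by standard geodesic ODE theory.

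Next I would upgrade $\Psi$ to a local diffeomorphism near $(x_0,0)$ via the inverse function theorem. The differential $d\Psi(x_0,0)$ restricts to the canonical inclusion $T_{x_0}S\hookrightarrow T_{x_0}M$ in the $z$-variable and sends $\partial_s$ to $\nu(x_0)$, which is transverse to $T_{x_0}S$ by construction. Consequently $d\Psi(x_0,0)$ is an isomorphism, and after shrinking $W$ and choosing $\eps>0$ small, $\Psi:(S\cap W)\times[0,\eps)\to W$ is a diffeomorphism onto its image, verifying (i) and (ii).

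With the coordinates $(x',x^n)$ now in place, I would verify \eqref{metricform} by two elementary computations along the normal geodesics. Since $\partial_{x^n}$ coincides with $\dot\gamma_{z,\nu(z)}$, the quantity $g_{nn}=g(\partial_{x^n},\partial_{x^n})$ is parallel along the geodesic and equals $g(\nu,\nu)=1$ at $x^n=0$, so $g_{nn}\equiv 1$. For the cross terms $g_{\alpha n}$ I would use that $[\partial_{x^\alpha},\partial_{x^n}]=0$ in a coordinate frame and that $\nabla_{\partial_{x^n}}\partial_{x^n}=0$ by the geodesic equation; combining these gives
\[
\partial_{x^n}g_{\alpha n}=g(\nabla_{\partial_{x^n}}\partial_{x^\alpha},\partial_{x^n})=g(\nabla_{\partial_{x^\alpha}}\partial_{x^n},\partial_{x^n})=\tfrac12\partial_{x^\alpha}g_{nn}=0.
\]
Hence $g_{\alpha n}$ is independent of $x^n$, and at $x^n=0$ it vanishes because $\partial_{x^\alpha}\in TS$ is $g$-orthogonal to $\partial_{x^n}=\nu$ there.

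I do not expect a genuine obstacle: this is the Lorentzian counterpart of the Riemannian boundary normal coordinate chart, and the only signature-dependent input is the existence of a spacelike unit normal, which is automatic once $S$ is assumed timelike. The one step worth writing out carefully is the invertibility of $d\Psi(x_0,0)$; the metric identities then follow mechanically from the geodesic equation, and the overall argument proceeds along the same lines as in \cite{Petrov_book}.
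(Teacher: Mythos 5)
Your proof is correct and follows the same route the paper indicates: the paper only cites \cite{Petrov_book} and notes that the $x^n$-lines being unit-speed geodesics forces $\Gamma^i_{nn}=0$, which is precisely the content of your two computations showing $\partial_{x^n}g_{nn}=0$ and $\partial_{x^n}g_{\alpha n}=0$. The only signature-specific point, that a timelike $S$ has a spacelike unit normal so $g(\nu,\nu)=+1$, is handled correctly, and the rest is the standard Gauss-lemma argument.
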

 
Clearly, $g_{\alpha\beta}$ has a Lorentzian signature as well.  If $M$ has a boundary, then $S$ can be $\bo$ and $x^n$ is restricted to $[0,\eps]$.  
 We will call such coordinates  the boundary normal coordinates. The lemma remains true if $S$ is spacelike with a negative sign in front of $\d x^n\otimes \d x^n$ in \r{metricform} (we replace the index $n$ by $0$ below), and this gives us a way to define a time function $t=x^0$ locally, and put the metric in the block form  
\begin{equation} \label{metricform2}
g=-\d t^2 + g_{ij}(t,x) \d x^i\otimes \d x^j, \quad 1\le i,j\le n
\end{equation}
with $g_{ij}$ Riemannian.

\subsection{Tensor fields vanishing on the lightlike cone} 
\begin{lemma}\label{lemma_conf}
	Let $h = \{h_{ij}\}$ be a tensor such that $h_{ij}v^i v^j=0$ for all $v$ lightlike for the metric $g$, with both $h$ and $g$ constant. Then $h=cg$ with some constant $c$. 
\end{lemma}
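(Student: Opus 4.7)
The statement is pointwise and purely linear algebraic, so I work at a single point and treat $g$ and $h$ as constant bilinear forms on $\R^{1+n}$. Only the symmetric part of $h$ contributes to $h_{ij}v^iv^j$, and the desired conclusion forces symmetry of $h$, so I may assume $h$ is symmetric from the outset. Choosing a linear change of basis that diagonalizes $g$, I reduce to the case $g=\eta=\mathrm{diag}(-1,1,\dots,1)$. The statement and the set of lightlike vectors transform consistently, so no generality is lost.

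Next I parameterize the lightlike cone. Every nonzero null vector is a scalar multiple of one of the form $v=(1,u)$ with $u\in S^{n-1}\subset\R^n$. The hypothesis becomes
\[
P(u):=h_{00}+2\sum_{i=1}^{n}h_{0i}u^i+\sum_{i,j=1}^{n}h_{ij}u^iu^j=0\quad\text{for all } u\in S^{n-1}.
\]
Split $P=P_{\mathrm{odd}}+P_{\mathrm{even}}$ under $u\mapsto-u$. The vanishing of $P_{\mathrm{odd}}=2\sum_i h_{0i}u^i$ on $S^{n-1}$ immediately gives $h_{0i}=0$ for $i\ge1$, so the $0i$ components already match $c\eta_{0i}=0$.

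The main (and only nontrivial) step is the even part. Write the spatial block as $h_{ij}=c\,\delta_{ij}+\tilde h_{ij}$ with $\tilde h$ symmetric and traceless, where $c:=\frac1n\sum_{i=1}^n h_{ii}$. Using $|u|^2=1$ on the sphere, the even equation becomes
\[
(h_{00}+c)+\sum_{i,j=1}^{n}\tilde h_{ij}u^iu^j=0\quad\text{on } S^{n-1}.
\]
Because $\tilde h$ is traceless, $\sum\tilde h_{ij}u^iu^j$ is a degree-two harmonic polynomial, whose restriction to $S^{n-1}$ is a spherical harmonic of degree two, hence $L^2(S^{n-1})$-orthogonal to constants. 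The two pieces must therefore vanish separately, giving $\tilde h=0$ and $h_{00}=-c$. With $\eta_{00}=-1$ and $\eta_{ii}=1$, this is exactly $h_{ij}=c\,\eta_{ij}=c\,g_{ij}$. Since $g$ and $h$ are assumed constant, the same constant $c$ works everywhere, so $h=cg$.

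The potential obstacle here is merely the separation of spherical-harmonic modes; once one writes the pullback of the quadratic form to the sphere and splits off the trace, everything is forced. No deeper geometry is needed.
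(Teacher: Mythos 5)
Your proof is correct. It follows the same basic strategy as the paper --- reduce to the Minkowski metric, fix $v^0=1$ so that the null directions are parameterized by a sphere, and expand the resulting polynomial identity in harmonics --- but the decomposition you use is genuinely different. The paper restricts to null vectors of the form $(1,0,\dots,0,\sin\alpha,\cos\alpha)$, expands in a Fourier series in the single angle $\alpha$ to identify the $\{0,n-1,n\}$ block as a conformal multiple of the $1+2$ Minkowski block, and then patches these partial conclusions together by permuting the two distinguished spatial slots (it also sketches a second proof that peels off $c=-h_{00}$ and quotes a lemma from the magnetic rigidity paper). You instead work on all of $S^{n-1}$ at once: the odd part kills $h_{0i}$, and splitting the spatial block into its trace and traceless parts reduces the even part to the orthogonality of degree-two spherical harmonics to constants. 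Your version avoids the coordinate-permutation patching and identifies the conformal factor directly as $\frac1n\sum_i h_{ii}=-h_{00}$, at the cost of invoking the (standard) fact that a traceless quadratic form restricts to a spherical harmonic orthogonal to constants; the paper's circle-by-circle computation is more elementary and entirely explicit. Two minor remarks: your opening reduction to symmetric $h$ is the right reading of the statement (the lemma is applied in the paper only to symmetric, metric-like tensors, and would indeed be false for a nonzero antisymmetric $h$); and for $n=1$ the spherical-harmonic step is vacuous since a traceless $1\times1$ matrix is zero, so your argument covers all dimensions.
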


\begin{proof}
Applying a Lorentzian transformation, we can always assume that $g$ is Minkowski. In fact, such a transformation would produce the Minkowski metric times a conformal factor.  	Take $v=(1,0, \dots,0, \sin\alpha,\cos\alpha)$. Expanding $h_{ij}v^i v^j=0$ in Fourier series in $\alpha$, we get
\begin{align*}
h_{00}&+ 2h_{0\, n-1}\sin\alpha + 2h_{0 n}\cos\alpha\\
& + \frac12\left(1-\cos(2\alpha)\right) h_{n-1\,n-1} + \sin(2\alpha)h_{n\,n-1} +\frac12 (1+\cos(2\alpha)) h_{n n}=0.
\end{align*}	
This implies $-h_{00} = h_{nn} = h_{n-1\,n-1}$, $h_{0\,n-1} = h_{0\,n} = h_{n\,n-1}=0$, i.e., the $\{0,n-1,n\}\times \{0,n-1,n\}$ block of $h$ is conformal to the $1+2$ Minkowski one, with conformal factor -$h_{00}$. We can put $\sin\alpha$ and $\cos\alpha$ in any two different positions different from the zeroth one to complete the proof. 

An alternative proof is to identify the conformal factor $c$ as $c=-h_{00}$ first, if $g$ is Minkowski. Then for $f:= h-cg$,  we get
\[
f_{ij}v^i v^j+ f_{i 0}v^i=0, \quad 1\le i,j\le n
\]
for all $\{v^i\}_{i=1}^{n}$ unit in the Euclidean norm. By \cite[Lemma~3.3]{St-magnetic}, this implies $f=0$.  
\end{proof}

As a consequence, if $h$ and $g$ depend on $x$, and  $h_{ij}(x)v^i v^j=0$ for all lightlike $(x,v)$ near a fixed $(x_0,v_0)$, we can extend this to all $v$ based at those points by analyticity on $v$ (and the proof is local in $v$ anyway), and we get the conclusion in the lemma with $c=c(x)$ having the regularity of $g$ and $h$: smooth or analytic.

\section{Recovery of the jet at the boundary} \label{sec_jet}
We prove Theorem~\ref{thm_bdry} in this section. 

\subsection{Step 1: ``Normalizing'' $\hat g$ and $g$} \label{sec_step1} 
Recall the coordinate convention of section~\ref{sec_3.2}. 
We assume that $x^0$ (not to be confused with the point $x_0=0$) is a local time coordinate, i.e, $g(\partial/\partial x^0, \partial/\partial x^0)<0$. 
We will choose a  metric $\psi^*(\mu\hat g)$, gauge equivalent to $\hat g$ such that it  has the form \r{metricform} up to $O((x^n)^\infty)$ with respect to the same boundary normal coordinates related to $g$, and it coincides with $g$ on $\partial/\partial x^0\times \partial/\partial x^0$ in $M$ up to $O((x^n)^\infty)$. We use the freedom to make conformal changes at this step but we do not use the equality of the scattering relations yet. 
Note that when the metrics are of the form $\mu (-\d t^2+h(x,\d x))$ (the conformally Riemannian case), this can always be achieved by putting $h$ in boundary local coordinates. 

We  normalize   $g$ and $\hat g$ first conformally assuming
\begin{equation}   \label{1'}
	\hat g = g\  \text{on $T\bo\times T\bo$, i.e.}, \quad  	\hat g_{\alpha\beta}(x',0) = g_{\alpha\beta}(x',0). 
\end{equation}
This can be easily achieved by dividing 
$\hat g$ by  $ \mu_0 >0$ with $\mu_0$ extended in $M$ near $x_0$. 
This does not change the data in either case, $\mathcal{S}$ or $\mathcal{S}^\sharp$ and allows us to assume $\mu_0=1$. 
Then we pass to \r{metricform} with respect to $g$, which does not affect \r{1'}. We do the same for $\hat g$, assuming at this point that $(x',x^n)$ are common boundary normal coordinates for both $g$ and $\hat g$. 

We want $\mu$ and $\psi$ to solve
\begin{equation}   \label{p1}
	[\psi^*(\mu\hat g)]_{00} = g_{00}, \quad [\psi^*(\mu\hat g)]_{in} = \delta_{in}, \quad i=0,1,\dots,n
\end{equation}
up to $O((x^n)^\infty)$ with boundary conditions $\psi=\Id$ on $x^n=0$, $\mu=1$ on $x^n=0$. The conversion to common boundary normal coordinates guarantees \r{p1} with $\mu=1$ on $x^n=0$ only. 
From now on, we assume that Greek indices run from $0$ to $n-1$ while Latin ones run from $0$ to $n$. Equation~\r{p1} is equivalent to
\begin{align}\label{p2}
	\mu(\psi(x)) \hat g_{\alpha\beta}(\psi(x))\frac{\partial \psi^\alpha}{\partial x^0} \frac{\partial \psi^\beta}{\partial x^0} &= g_{00},\\
	\mu(\psi(x))\Big( \hat g_{\alpha \beta}(\psi(x))\frac{\partial \psi^\alpha}{\partial x^i} \frac{\partial \psi^\beta}{\partial x^n} +   \frac{\partial \psi^n}{\partial x^i}  \frac{\partial \psi^n}{\partial x^n}  \Big)& = \delta_{in}, \quad i=0,1,\dots,n
	 \label{p3}
\end{align}
up to $O((x^n)^\infty)$ again. 
This is an $(n+2)\times(n+2)$ system for $(\psi,\mu)$ but there are no derivatives of $\mu$ involved. The boundary condition is 
\begin{equation}   \label{p4}
	\psi(x',0)= (x',0), \quad \mu(x',0)=1.
\end{equation}
The latter condition is automatically satisfied if the former is, as a consequence of \r{p2} and \r{1'}. 
We can eliminate $\mu$ in \r{p3} and \r{p2}   to get the fully nonlinear first order  $(n+1)\times(n+1)$ system  
\begin{equation}\label{p5}
	 \hat g_{\alpha \beta}(\psi(x))\frac{\partial \psi^\alpha}{\partial x^i} \frac{\partial \psi^\beta}{\partial x^n} +   \frac{\partial \psi^n}{\partial x^i}  \frac{\partial \psi^n}{\partial x^n}    =  \frac{\delta_{in} }{g_{00}} \hat g_{\alpha\beta}(\psi(x))\frac{\partial \psi^\alpha}{\partial x^0} \frac{\partial \psi^\beta}{\partial x^0} , \quad i=0,1,\dots,n  
\end{equation}
for $\psi$, equivalent to  \r{p3} by \r{p2} since having $\psi$, we can solve for $\mu$ in \r{p2}. 
Equation \r{p5} can also be written as 
\be{p5a}
 (\hat g\circ\psi) (\partial_{x^i}\psi, \partial_{x^n} \psi)=  \frac{\delta_{in} }{g_{00}} (\hat g\circ\psi) (\partial_{x^0}\psi, \partial_{x^0} \psi) , \quad i=0,1,\dots,n .
\ee

We will prove that the boundary $x^n=0$ is non-characteristic for \r{p5}, \r{p4}. More precisely, we show that 
under the additional assumption $\partial\psi^n/\partial x^n>0$ at $x^n=0$, we can put \r{p5} in normal form, see also \r{nf} below. 
Differentiating \r{p4} with respect to $x'$, we get $\partial\psi^i/\partial x^\alpha =\delta^i_\alpha $, $i=0,\dots,n$, $\alpha = 0,\dots,n-1$ at $x^n=0$. In particular, the right-hand side of \r{p5} equals $\delta_{in}\hat g_{00}(x',0)/g_{00}(x',0)=\delta_{in}$.  
We can write \r{p5}, equivalently, \r{p5a}, at $x^n=0$ as $ \hat g(\partial_{x^i}\psi, \partial_{x^n} \psi)= \delta_{in}$, which means that $\partial_{x^n} \psi$ is orthogonal to the basis vectors $e_\alpha$, $0\le \alpha\le n-1$, and unit. This can be solved for $\partial_{x^n} \psi$ at $x^n=0$ 
implying $\partial_{x^n} \psi=e_n$ at $x^n=0$.   Equation \r{p5a} can be solved for $\partial_{x^n} \psi$ when $x$ is near $x_0=0$ as well, giving us the normal form, see \r{nf} again. 
We get 
\begin{equation}   \label{psi}
	\psi = \Id+ O((x^n)^2).
\end{equation} 
In particular, $\psi$ is a diffeomorphism near $x_0$.

Therefore, we can find a solution of \r{p1}, \r{p4} up to $O((x^n)^\infty)$. Note that the usual argument is that if a solution exists, we can compute the Taylor expansion at the boundary. The same arguments show that an asymptotic solution actually exists, even if we do not know that an exact one does (and this argument is used in the proof of the Cauchy Kowalevsky theorem). 

We replace $\hat g$ by the gauge equivalent $\psi^*(\mu \hat g)$.  Then we have 
\begin{equation}   \label{14}
\hat g_{00} = g_{00}+ O((x^n)^\infty), \quad \hat g_{in} = g_{in}+ O((x^n)^\infty), \quad i=0,\dots,n.
\end{equation}
Recall the boundary conditions \r{1'}. 
We want to prove next that $\hat S = S$ implies
\begin{equation}   \label{15}
	\hat g = g+ O((x^n)^\infty).
\end{equation}
for $x\in M$ near $x_0$.

\subsection{Step 2: $\bo$ is strictly convex with respect to $\hat g$ as well} 
The geodesic equation implies
\begin{equation}\label{2}
\ddot \gamma^n+ \Gamma^n_{\alpha\beta}\dot\gamma^\alpha \dot\gamma^\beta=0.
\end{equation}
Recall that $0\le \alpha,\beta\le n-1$ which is implied by the property $\Gamma^n_{n j}=0$, $\forall j$. 
In the coordinates in Lemma~\ref{seminormal}, 
$\Gamma^n_{\alpha\beta}=-\frac12 \partial_{x^n} g_{\alpha\beta}$, which is also the second fundamental form of $U$. Strict convexity at $(x,v)\in TU\setminus 0$ is equivalent to $\Gamma^n_{\alpha\beta}(x) v^\alpha v^\beta>0$.
It is positive definite along  vectors close to $(x_0,v_0)$  by assumption. We can assume that the metric $g$ is Minkowski at $x_0=(0,0)$. 
We take  $v=v_{\eps}$ lightlike,  pointing to the interior of $U$, so that it 
converges to $v_0$, as $\eps\to0+$. We do this by taking $v=v_{\eps}=(1,\sqrt{1-\eps^2 }\theta  ,\eps)$ with $\theta$ unit in $\R^{n-1}$; then $v_0=(1,\theta,0)$. Then for the $n$-th component of the  geodesic $\gamma= \gamma_{0, v_{\eps}}(s)$ through $x_0=0$, with initial direction $\dot\gamma(0)=v_{\eps}$,  we have
%
\begin{equation}\label{3}
\gamma^n(s) =  s\eps - \frac{s^2}2 \Gamma^n_{\alpha\beta}(0) v_{\eps}^\alpha v_{\eps}^\beta +O(s^3 ).
\end{equation}
%
 The function $\gamma^n(s) /s$ then has a non-negative zero for $s= \tau (v_{\eps}):=2\eps/ \Gamma^n_{\alpha\beta}(0) v^\alpha_0 v^\beta_0 +O(\eps^2) $. We can think of it as an escape ``time.'' Then $\gamma_{0,v_{\eps}}(\tau (v_{\eps})s)= \gamma_{0,\tau (v_{\eps} ) v_{\eps} }(s)$ reaches $U$ again for $s=1$.

Let $y_\eps\in U$ be the first intersection point of the geodesic $\gamma$ with $U$ (not counting the initial point $x_0$). 
Then 
\begin{equation}\label{4}
w_\eps:= \exp_x^{-1}y_\eps = \tau (v_{\eps}) v_{\eps} = 2\eps v_0/ \Gamma^n_{\alpha\beta}(0) v^\alpha_0 v^\beta_0 +O(\eps^2).
\end{equation}
 Since $\d\exp_x$ is identity at the origin, we get the same asymptotic expansion for $y_\eps$ in the so chosen local coordinates. 

We will show that $U$ is strictly convex at $(x_0,v_0)$ with respect to $\hat g$ as well. Since $\hat g=g$  on $T\bo\times T\bo$ by \r{1'}, we know that $(x_0,v_0)$ is lightlike for $\hat g$ as well. 
By \r{14}, the lightlike vector at $x_0$ pointing into $M$, related to $\hat g$, is still $v_0$. Then \r{3} still holds with $ \Gamma^n_{\alpha\beta}$ replaced by  $\hat\Gamma^n_{\alpha\beta}$ with the hat over a quantity indicating that it is related to $\hat g$. Note that $(x',x^n)$ are (exact) boundary normal coordinates for $g$ only, and only such up to $O((x^n)^\infty)$ with respect to $\hat g$ by \r{14} but this does not affect our argument.  
Since 
$\hat{\mathcal{S}}=\mathcal S$ or $\hat{\mathcal{S}}^\sharp=\mathcal S^\sharp$, 
$\hat\gamma^n(s)/s$ still has a positive zero at $s\sim \eps$, which implies $\hat\Gamma^n_{\alpha\beta}(0)>0$; thus $U$ is strictly convex at $(x_0,v_0)$, and therefore near it as well,  with respect to $\hat g$ as well.

\subsection{Step 3: recovery of the jet of the metric} 
In preparation for the final step, assume that \r{15} does not hold. Then $\hat g-g  = (x^n)^kh+ O((x^n)^{k+1})$ near $x_0$ with some symmetric tensor field $h=h(x')$ not vanishing at $x_0=0$, and with some $k=1,2,\dots$. By \r{14}, $h_{00}=0$, $h_{ni}=0$, $\forall i$. By Lemma~\ref{lemma_conf}, if we prove that   
$h_{\alpha\beta}(0)  v^\alpha v^\beta=0$ for all $v\in T_0\bo$, close to $v_0$, lightlike for $g$ on $T\bo\times T\bo$,   then we would get $h(0)=\lambda g(0)$ on $T\bo\times T\bo$ with some $\lambda$. Then $0=h_{00}(0)=\lambda g_{00}(0)$ by \r{p1}; therefore $\lambda=0$, and then $h(0)=0$. That would be a contradiction.

Assume $h_{\alpha\beta}(0,0)v_0^\alpha v_0^\beta\not=0$. 
Without loss of generality, we can assume $h_{\alpha\beta}(0,0)v_0^\alpha v_0^\beta>0$; if the opposite, we can switch $g$ and $\hat g$ below. Then $(\hat g -g)(x)v^\alpha v^\beta>0$ for $x$ in the interior of $M$ (but not on $U$) close enough to $x=0$, and for $v$ close enough to $v_0$. With $y_\eps$ as above, let $\gamma_\eps$ and $\hat\gamma_\eps$ be the lightlike geodesics in the metrics $g$ and $\hat g$, respectively, connecting $x=0$ and $y_\eps$. By what we assumed, $\hat\gamma_\eps$ is a timelike curve in the metric $g$ when $0<\eps\ll1$, except for the endpoints, where it is lightlike. Assume it is parameterized by $s\in[0,1]$. Consider the points $\hat\gamma_\eps(\delta)$, $\hat\gamma_\eps(1-\delta)$ with $0<\delta<1/2$. Those two points are connected by the timelike $\hat\gamma_\eps$ (for $g$), see Figure~\ref{Lorentz_distance_fig2} where the dashed curves are geodesic segments related to $g$. 
\begin{figure}[h!] 
  \centering
  \includegraphics[scale=1,page=2]{Lorentz_scattering_bdry_fig}
\caption{\small  Illustration to Step~3.
}
\label{Lorentz_distance_fig2}
\end{figure} 
 By \cite[Proposition~5.34]{ONeill-book}, the ``radial'' geodesic segment (in the metric $g$, again)  between those two points is the unique longest timelike curve connecting them when   $0<\eps\ll1$. 
We recall that the radial geodesic connecting $a$ and $b$ is the geodesic from $a$ with direction $\exp_x^{-1}y$, assuming the inverse exponential map is well-defined; and we restrict the considerations close to $x_0$. Passing to the limit $\delta\to0+$, we get that the length of $\hat\gamma_\eps$ has an upper limit $0$ because the unique timelike geodesic in the proposition tends to $\gamma_\eps$, which is lightlike. 
Thus we get that $\hat\gamma_\eps$  is lightlike for $g$ as well; in particular $\hat\gamma_\eps$ and $\gamma_\eps$ coincide as point sets. This contradicts our assumption. We can perturb $v_0$ on $T_0\bo$ a bit keeping it lightlike for $g$. Perturbing $x_0=0$ now, we get that $h=0$ on $TU\times TU$. Then we get the needed contradiction.

This completes the proof of  Theorem~\ref{thm_bdry}.

\section{Local rigidity for analytic metrics} 
\begin{proof}[Proof of Theorem~\ref{thm_local}]
We return to the system \r{p2}, \r{p3}, \r{p4}, this time with the analyticity assumptions. As explained in section~\ref{sec_step1},  the system can be reduced to \r{p5},  \r{p4}, and after solving for $\psi$, we determine $\mu$ from \r{p2}. The reduced system has analytic coefficients near $x_0=0$ when the original system is analytic, including $\mu_0$ on $\bo$ extended analytically to $M$ near $x_0$. 
To make the boundary condition homogeneous, we set $\phi(x):= \psi(x)-x$. Then, see also section~\ref{sec_step1}, we recast \r{p5a} as
\be{p5.1a}
 (\hat g\circ(\phi+\Id)) (\partial_{x^i}\phi+e_i, \partial_{x^n} \phi +e_n)=  \frac{\delta_{in} }{g_{00}} (\hat g\circ(\phi+\Id)) (\partial_{x^0}\phi+ e_0, \partial_{x^0}\phi+e_0 ) , \quad i=0,1,\dots,n 
\ee
with Cauchy data  
\begin{equation}   \label{5.2}
	\phi(x',0)=0.
\end{equation}
As we showed in section~\ref{sec_step1}, \r{p5.1a}, \r{5.2} is solvable at $x^n=0$ near $x'=0$ assuming $\partial_{x^n} \phi^n>0$, which allows us to take the positive sign of the square root to determine $\partial_{x^n} \phi^n$. This means that we can write it as
\be{nf}
\partial_{x^n}\phi = F(x,\partial_{x'}\phi ), \quad \phi(x',0)=0
\ee
with $F$ an $(n+1)$-vector valued function satisfying $F(0,0)=0$; analytic near $(0,0)$ under our analyticity assumptions. By the Cauchy–Kowalevski theorem, see \cite{Evans, Folland},  there exists an unique, in the class of analytic functions, local solution of \r{p5.1a}, \r{5.2}, near $x=0$. Then there is an analytic $\psi$ solving \r{p5}, \r{p4} near $x=0$ as well; and then we can determine $\mu$, also analytic, locally by \r{p2} as well since $\hat g_{00}(0)\not=0$. Then \r{p1} is satisfied locally as well. The jets of $g$ and $\psi^*(\mu\hat g)$ coincide near $x=0$ by the results of section~\ref{sec_jet}, therefore $g=\psi^*(\mu\hat g)$ near $x=0$ by analytic continuation. 
\end{proof}


\end{document}